\documentclass[11pt,a4paper,fleqn,final]{article}

\usepackage{preprint-modern}

\crefname{appsec}{Appendix}{Appendices}
\Crefname{appsec}{Appendix}{Appendices}

\usepackage[
backend=biber,
bibencoding=utf8,
giveninits=true,
style=alphabetic,
maxnames=4,
isbn=false,
doi=true,
eprint=false,
url=false,
date=year
]{biblatex}
\AtEveryBibitem{%
  \ifentrytype{online}{}{%
    \clearfield{url}%
    \clearfield{urldate}%
  }%
}

\AtEveryBibitem{\clearfield{pubstate}}
\DeclareFieldFormat{doi}{%
  \href{https://doi.org/#1}{doi.org/#1}}

\DeclareFieldFormat{eprint:arxiv}{%
   \href{https://arxiv.org/abs/#1}{arXiv:#1}}

\DeclareFieldFormat{eprint:hal}{%
    \href{https://hal.science/#1}{#1}}

\renewbibmacro*{doi+eprint+url}{%
  \ifentrytype{online}{%
    \iffieldundef{eprinttype}{%
      \usebibmacro{url+urldate}%
    }{%
      \usebibmacro{eprint}%
    }%
  }{%
    \iftoggle{bbx:eprint}{%
      \iffieldundef{eprint}{%
        \iftoggle{bbx:doi}{\printfield{doi}}{}%
      }{%
        \usebibmacro{eprint}%
      }%
    }{%
      \iftoggle{bbx:doi}{\printfield{doi}}{}%
    }%
  }%
}

\begin{document}
	
	\title{A dispersive extension result for a class of nonlocal nonlinearities}

	\author[0000-0002-0329-9402]{Bastian Hilder}{Department of Mathematics, Technische Universität München, Boltzmannstraße 3, 85748 Garching b.\ München, Germany}{bastian.hilder@tum.de}
	\author[0000-0002-7063-6173]{Christian Kuehn}{Department of Mathematics, Technische Universität München, Boltzmannstraße 3, 85748 Garching b.\ München, Germany}{ckuehn@ma.tum.de}
	
	\msc{35B34; 47D08; 35J10}
	\keywords{nonlocal nonlinearities; dispersive extension; resonant average; Schrödinger equation}
	
	\maketitle

\begin{abstract}
    We prove that a class of nonlocal nonlinearities on periodic Sobolev spaces can be expressed as a trace of the unique mild solution to a local dispersive problem consisting of a system of forced Schrödinger equations. The proof is based on a reformulation of the nonlocal nonlinearities as the resonant orbit average of a real-analytic function under the unitary group generated by the free Schrödinger operator $\ii \partial_x^2$. We illustrate our result by providing an equivalent local reformulation for a recently obtained nonlocal amplitude equation that formally captures the dynamics of parabolic systems close to a conserved Hopf instability.
\end{abstract}


\section{Introduction}

Nonlocal partial differential equations appear frequently across a wide range of applications in mechanics \cite{nagel2025-03SIAMRev} and life sciences \cite{pal2025-07PhysLifeRev}. 
A specific recent example is the nonlocal amplitude equation
\begin{equation}\label{eq:amplitude-equation}
  \partial_t a = \partial_x^2 \Big[-\mu a - \partial_x^2 a + \frac{3}{2}(2 a \langle|a|^2\rangle - \Fcal^{-1}[\hat{a} |\hat{a}|^2])\Big]
\end{equation}
with $x \in (0,L)$ for some $L > 0$ and $t \geq 0$, formally derived from a system of non-reciprocally coupled Cahn–Hilliard equations close to a conserved Hopf instability; see \cite{greve2024-12Chaos}. Here $a = a(t,x) \in \C$ is the complex amplitude of a fast oscillatory mode, $\mu > 0$ is the distance to the onset of instability, and periodic boundary conditions are assumed. Additionally, $\langle \cdot \rangle$ denotes the spatial average and $\Fcal^{-1}$ the inverse discrete Fourier transform. It is conjectured in \cite{greve2024-12Chaos} that equation \eqref{eq:amplitude-equation} appears universally in partial differential equations (PDEs) close to a conserved Hopf instability, which occurs, for example, in pattern formation of MIN proteins and active matter systems; see \cite{frohoff-hulsmann2023-09PhysRevLett} for further references. It is well known that nonlocal averaging terms can also occur in amplitude equations close to a Turing–Hopf instability; see, for example, \cite{schneider1997-01ProcRoySocEdinburghSectA} and \cite[Remark 10.7.6]{schneider2017book}. Note carefully that \eqref{eq:amplitude-equation} contains two nonlocal terms that are also acted upon by a differential operator. This is substantially different from semilinear nonlocal PDEs~\cite{morgan2014-03PhysD}, which often even reduce to local amplitude equations~\cite{kuehn2018-07JMathPhys,kuehn2025-02PhysDa}.

Naturally, the tools to analyse nonlocal PDEs are less developed than their local counterparts. Therefore, a common strategy is to find an equivalent formulation of the problem, where only local terms appear. A canonical example is the reformulation of integral convolution terms whose kernel is the Green's function of a local elliptic operator. In this case, it is possible to obtain an equivalent local system by extending the system with an additional local elliptic equation to replace the convolution integral. This equivalence has for example been used in \cite{gajewski2004-03NonlinearAnal,porta2015DiscreteContinDynSystSerB}.

A second canonical example is the localisation of fractional powers of elliptic operators by introducing an additional spatial coordinate. For the fractional Laplacian $(-\Delta)^s$ with $s \in (0,1)$, Caffarelli and Silvestre \cite{caffarelli2007-08CommPartialDifferentialEquations} proved that $(-\Delta)^s$ can be obtained from an elliptic extension to the upper half-plane that maps the boundary Dirichlet data to the boundary Neumann data. The corresponding nonlocal operators are called Dirichlet–to–Neumann operators and appear frequently, for example, in reformulations of the water-wave problem when it is reduced to an equation on the fluid surface; see \cite{lannes2013-05book,groves2020-02ProcRSocA}. For fractional powers of more general operators, such extension results have been proved in \cite{stinga2010-10CommPartialDifferentialEquations,gale2013-06JEvolEqu}; see also \cite{biswas2024-08JFunctAnal} for an extension result for general powers $s > 0$. In addition, \cite{kemppainen2015DiscreteContinDynSyst} provides a different extension of the fractional Laplacian via a hyperbolic problem.

To the best of our knowledge, no such localisation result exists for the nonlocal nonlinearities appearing in the amplitude equation \eqref{eq:amplitude-equation}. In this paper, we prove that a class of resonant nonlinearities (see \cref{def:nonlinearity}), which includes the nonlinearity in \eqref{eq:amplitude-equation}, admits a local formulation as a system of forced Schrödinger equations; see \cref{thm:main-result}. More precisely, we introduce the auxiliary system
\begin{equation}\label{eq:schrödinger-eq-intro}
  \begin{split}
    \partial_\tau v &= \ii \partial_x^2 v, \\
    \partial_\tau w &= \ii \partial_x^2 w + \rho(v)
  \end{split}
\end{equation}
for some function $\rho$ depending on the nonlinearity. We then show that the chosen nonlinearity at a periodic function $u$ can be recovered from the mild solution to \eqref{eq:schrödinger-eq-intro} with initial data $(v,w)(0) = (u,0)$ at $\tau = \tau_P$, where $\tau_P > 0$ is the period of the periodic free Schrödinger group $e^{i \tau \partial_x^2}$. Applying this result to the amplitude equation \eqref{eq:amplitude-equation} then yields an equivalent local reformulation of the amplitude equation close to a conserved Hopf instability; see \cref{cor:localisation-amplitude-eq}. Note that, while we formulate the theory in terms of the free Schrödinger operator, \cref{thm:main-result} can be extended directly to more general dispersive operators; see \cref{sec:generalisations}.

This construction is conceptually similar to the elliptic extension by Caffarelli and Silvestre \cite{caffarelli2007-08CommPartialDifferentialEquations} in that a nonlocal term is recovered from the trace data of an extended local problem. However, the underlying mechanism is very different. Specifically, we introduce an additional \emph{temporal} direction instead of a \emph{spatial} direction and the extended problem is dispersive rather than elliptic.

\paragraph{Outline}
The paper is organised as follows. In \cref{sec:localisation-result}, we define the relevant class of resonant nonlocal nonlinearities, and prove that these can be expressed as evaluations of mild solutions to a system of local forced Schrödinger equations; see \cref{thm:main-result}. In \cref{sec:applications}, we then apply the result to obtain a local reformulation of a class of nonlocal evolution equations (see \cref{prop:equivalence}) and specifically give a local reformulation of the amplitude equation \eqref{eq:amplitude-equation}; see \cref{cor:localisation-amplitude-eq}. Finally, we discuss possible extensions of the class of relevant nonlinearities in \cref{sec:generalisations}.

\section{The localisation result}\label{sec:localisation-result}

Before stating the results, we introduce some notation used throughout the paper. Fix a spatial domain $(0,L)$ of length $L > 0$. For any $\ell > 1/2$ we denote by $H^\ell_\per(0,L)$ the Sobolev space of complex-valued, periodic functions on $(0,L)$. Due to the assumption $\ell > 1/2$, $H^\ell_\per(0,L)$ is a multiplication algebra with the norm
\begin{equation*}
  \|u\|_{H^\ell_\per} = C \Big(\sum_{K \in \Z} (1+ |K|^2)^\ell |\hat{u}_K|^2\Big)^{\frac{1}{2}}, \qquad u(x) = \sum_{K \in \Z} \hat{u}_{K} \ee^{\ii \frac{2\pi K x}{L}},
\end{equation*}
where the constant $C$ is chosen such that the norm is submultiplicative. Since $\ell > 1/2$, we have that $H^\ell_\per(0,L)$ embeds into $L^\infty(0,L)$, and, by choosing a potentially larger $C$, we can guarantee that $\|u\|_{L^\infty(0,L)} \leq \|u\|_{H^\ell_\per(0,L)}$. Specifically, this yields that $\sum_{K \in \Z} |\hat{u}_K| < \infty$. Finally, we denote the Fourier transform on $H^\ell_\per(0,L)$ and its inverse by $\Fcal$ and $\Fcal^{-1}$, respectively.

For $a \in H^\ell_\per(0,L)$ we can then define the spatial average
\begin{equation*}
  \langle |a|^2 \rangle \coloneqq \sum_{K \in \Z} |\hat{a}_K|^2.
\end{equation*}
For notational convenience, we also denote the constant-one vector in $\Z^a$ for $a \in \N$ by $\oneb_a = (1,\dots, 1)$.

As discussed in the introduction, the localisation result is based on a dispersive extension using the free Schrödinger operator $\ii \partial_x^2$. Therefore, we recall some basic properties here. The free Schrödinger operator generates a unitary group $S_{\tau} \coloneqq \ee^{\ii \tau \partial_x^2}$ on $H^\ell_\per(0,L)$ and $S_{\tau} v_0 \in C^0(\R;H^\ell_\per(0,L))$ is the unique mild solution of the free Schrödinger equation in $H^\ell_\per(0,L)$ with initial value $v_0$ at $\tau = 0$. The action of $S_{\tau}$ on $u \in H^\ell_\per(0,L)$ can be expressed using the Fourier series
\begin{equation}\label{eq:representation-free-schrödinger-group}
  S_{\tau} u = \sum_{K \in \Z} \ee^{-\ii \tau \big(\frac{2\pi K}{L}\big)^2} \hat{u}_K \ee^{\ii \frac{2\pi K x}{L}}.
\end{equation}
The group $S_{\tau}$ is unitary on $H^\ell_\per(0,L)$, that is, $\|S_{\tau} v\|_{H^\ell_\per} = \|v\|_{H^\ell_\per}$. Finally, due to the finite length of the interval $(0,L)$, the flow $S_\tau$ is periodic with period $\tau_P \coloneqq \frac{L^2}{2\pi}$, which follows directly from the representation \eqref{eq:representation-free-schrödinger-group}.

We now define the class of nonlocal nonlinearities for which the main localisation result \cref{thm:main-result} is valid.

\begin{definition}\label{def:nonlinearity}
  Fix $\ell > 1/2$ and let $\rho : \C \to \C$ be real analytic such that
  \begin{equation}\label{eq:series-convergence}
    \rho(z) = \sum_{\substack{a,b \geq 0 \\ a+b \geq 2}} \alpha_{a,b} z^a \bar{z}^b, \qquad \text{with} \sum_{\substack{a,b \geq 0 \\ a+b \geq 2}} |\alpha_{a,b}| r^{a+b} < \infty
  \end{equation}
  for all $r < R \leq \infty$. For $u \in H^\ell_\per(0,L)$ with $\|u\|_{H^\ell_\per} < R$, define the corresponding nonlinearity $\Ncal_\rho$ via its Fourier symbol by
  \begin{equation}\label{eq:resonant-nonlinearity}
    \widehat{\Ncal_\rho(u)}_K = \sum_{\substack{a,b \geq 0 \\ a+b \geq 2}} \alpha_{a,b} \Big[ \sum_{(\j,\k) \in \Rcal_{a,b}(K)} \prod_{m=1}^a \hat{u}_{\j_m} \prod_{m=1}^b \bar{\hat{u}}_{\k_m} \Big]
  \end{equation}
  where for each wave number $K \in \Z$, the set of resonant Fourier modes $\Rcal_{a,b}(K)$ is defined as
  \begin{equation*}
    \Rcal_{a,b}(K) \coloneqq \{(\j,\k) \in \Z^a \times \Z^b \,:\, K^2 = |\j|^2 - |\k|^2 \text{ and } K = \oneb_a \cdot \j - \oneb_b \cdot \k \}.
  \end{equation*}
\end{definition}

\begin{remark}
  In \eqref{eq:resonant-nonlinearity} and throughout the paper, we use the conventions $|\j|^2 = 0$ and $\oneb_0 \cdot \j = 0$ for all $\j \in \Z^0$. Additionally, we set $\prod_m^0 (\cdot) = 1$.
\end{remark}

\cref{def:nonlinearity} in particular includes the nonlinearity in the nonlocal amplitude equation \eqref{eq:amplitude-equation}. Indeed, with the choice $\rho(u) = u^2 \bar{u} = u |u|^2$, we find
\begin{equation*}
  \widehat{\Ncal_\rho(u)}_K = 2\hat{u}_K \sum_{Q \in \Z} |\hat{u}_Q|^2 - \hat{u}_K |\hat{u}_K|^2
\end{equation*}
by observing that for $a = 2$ and $b = 1$, the resonance conditions in $\Rcal_{2,1}(K)$ yield
\begin{equation*}
  0 = K_1^2 + K_2^2 - K_3^2 - K^2 \text{ and } 0 = K_1 + K_2 - K_3 - K
\end{equation*}
with $\j = (K_1,K_2)$ and $\k = (K_3)$; see \cite[Equations (14)--(16)]{greve2024-12Chaos} for detailed calculations.

We briefly comment on the set of resonant Fourier modes $\Rcal_{a,b}(K)$. The second condition $K = \oneb_a \cdot \j - \oneb_b \cdot \k$ is the standard condition for a convolution of Fourier modes at Fourier wave number $K \in \Z$. We specifically note that the indices $\k$ act on the complex conjugate of $u$ and therefore enter the balance with a minus sign. As we will see later, the first condition $K^2 = |\j|^2 - |\k|^2$ identifies the nonlinear interactions of Fourier modes which are resonant under a Schrödinger orbit average; see also \cref{rem:resonance-condition}.

Our main result now states that the nonlocal nonlinear terms in \cref{def:nonlinearity} can be expressed as a trace of the mild solution to a system of forced Schrödinger equations, where the forcing term is precisely determined by the generating function $\rho$.

\begin{theorem}\label{thm:main-result}
  Let $\ell > \tfrac{1}{2}$, and let $\Ncal_\rho$ be a nonlinearity satisfying \cref{def:nonlinearity} with convergence radius $R$. Then, for all $u \in H^\ell_\per(0,L)$ with $\|u\|_{H^\ell_\per} < R$, it holds that
  \begin{equation*}
    \begin{split}
      \Ncal_\rho(u) = \tau_P^{-1} w(\tau_P)
    \end{split}
  \end{equation*}
  where $(v,w) \in C^0([0,\tau_P];H^\ell_\per(0,L) \times H^\ell_\per(0,L))$ is the unique mild solution to
  \begin{equation}\label{eq:forced-Schrödinger-system}
    \begin{split}
        \partial_\tau v &= \ii \partial_x^2 v, \\
        \partial_\tau w &= \ii \partial_x^2 w + \rho(v)
    \end{split}
  \end{equation}
  with initial data $(v,w)(0) = (u,0) \in H^\ell_\per(0,L) \times H^\ell_\per(0,L)$.
\end{theorem}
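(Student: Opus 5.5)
We will compute the mild solution explicitly and then read off its Fourier coefficients. Since $v(\tau) = S_\tau u$ and $S_\tau$ is unitary, we have $\|v(\tau)\|_{H^\ell_\per} = \|u\|_{H^\ell_\per} < R$ for all $\tau$. The algebra property of $H^\ell_\per$ together with \eqref{eq:series-convergence} then shows that the series $\rho(v(\tau)) = \sum \alpha_{a,b} v^a \bar v^b$ converges absolutely in $H^\ell_\per$. Here we use that conjugation preserves the $H^\ell_\per$ norm and that submultiplicativity gives $\|v^a\bar v^b\|_{H^\ell_\per} \le \|u\|_{H^\ell_\per}^{a+b}$. Uniform convergence on $[0,\tau_P]$ and continuity of $\tau\mapsto S_\tau u$ yield $\rho(v)\in C^0([0,\tau_P];H^\ell_\per)$.

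The linear forced problem then has the unique mild solution
\[
w(\tau) = \int_0^\tau S_{\tau-s}\rho(v(s))\,\mathrm{d}s ,
\]
and $(v,w)$ is the unique mild solution of \eqref{eq:forced-Schrödinger-system}, since the system is triangular. Periodicity gives $S_{\tau_P - s} = S_{-s}$, so
\[
\tau_P^{-1}w(\tau_P) = \tau_P^{-1}\int_0^{\tau_P} S_{-s}\rho(S_s u)\,\mathrm{d}s ,
\]
which is the resonant orbit average.

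Next we compute Fourier coefficients term by term, exchanging the sum over $(a,b)$ with the integral and the Fourier projection by dominated convergence. Write $\omega_K = (2\pi K/L)^2$. For a single monomial,
\[
\widehat{(S_s u)^a\overline{(S_s u)}^b}_K = \sum_{\oneb_a\cdot\j-\oneb_b\cdot\k=K} e^{-\ii s\left(\sum_m\omega_{\j_m}-\sum_m\omega_{\k_m}\right)}\prod_m\hat u_{\j_m}\prod_m\bar{\hat u}_{\k_m}.
\]
This inner sum converges absolutely because $\sum|\hat u_K|<\infty$. Applying $S_{-s}$ multiplies the coefficient by $e^{\ii s\omega_K}$. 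The total phase is $e^{-\ii s(2\pi/L)^2(|\j|^2-|\k|^2-K^2)}$, and since $(2\pi/L)^2\tau_P = 2\pi$, we have
\[
\tau_P^{-1}\int_0^{\tau_P} e^{-2\pi\ii n s/\tau_P}\,\mathrm{d}s = \delta_{n,0}, \qquad n = |\j|^2-|\k|^2-K^2 \in \Z .
\]
Only the tuples in $\Rcal_{a,b}(K)$ survive, which reproduces \eqref{eq:resonant-nonlinearity}.

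The main obstacle is justifying the interchanges of summation and integration. For each $K$, the absolute sum over $(a,b)$ and over the convolution indices is bounded by
\[
\sum_{a,b}|\alpha_{a,b}|\Big(\sum_Q|\hat u_Q|\Big)^{a+b},
\]
and we need this to be finite. With the normalisation fixed in the paper, $\sum_Q|\hat u_Q|$ is only known to be finite; relating it to $\|u\|_{H^\ell_\per}<R$ requires care. We therefore work at the level of $H^\ell_\per$-valued Bochner integrals instead. The series $\sum_{a,b}\alpha_{a,b}S_{-s}(S_su)^a\overline{(S_su)}^b$ converges in $C^0([0,\tau_P];H^\ell_\per)$ by the bound in the first paragraph. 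Hence the integral commutes with the $(a,b)$-sum, and the continuous functional $\Fcal_K$ commutes with both. For each fixed monomial, the finite-degree convolution sum converges absolutely, since $\sum_Q|\hat u_Q|<\infty$, uniformly in $s$, so Fubini applies monomial by monomial. This avoids needing any quantitative bound on $\sum_Q|\hat u_Q|$. The same argument shows in passing that the series \eqref{eq:resonant-nonlinearity} converges and defines an element of $H^\ell_\per$.
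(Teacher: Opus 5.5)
Your proposal is correct and follows essentially the same route as the paper. You solve $v=S_\tau u$ explicitly, obtain $w(\tau_P)$ from the Duhamel formula together with $S_{\tau_P-s}=S_{-s}$, and then identify the orbit average with $\Ncal_\rho(u)$. That identification exchanges the $(a,b)$-sum with the $H^\ell_\per$-valued Bochner integral via uniform convergence in $C^0([0,\tau_P];H^\ell_\per)$, and applies Fubini monomial by monomial using $\sum_Q|\hat u_Q|<\infty$, so that only the tuples in $\Rcal_{a,b}(K)$ survive; this is exactly how the paper's lemma avoids any quantitative bound on $\sum_Q|\hat u_Q|$ in terms of $R$.
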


The key observation in the proof is that the nonlocal nonlinearities in \cref{def:nonlinearity} can be expressed as an orbit average of the unitary group $S_\tau$ generated by the free Schrödinger operator $\ii\partial_x^2$. Similar integral nonlinearities occur in the analysis of dispersion-managed nonlinear Schrödinger (NLS) equations; see \cite{lushnikov2001-10OptLett} for a numerical example and \cite{choi2023-06JMathAnalAppl} for an analytical one. Therefore, such a result is also of independent interest.

\begin{lemma}\label{lem:integral-representation}
  Let $\ell > 1/2$ and let $\Ncal_\rho$ be a nonlinearity satisfying \cref{def:nonlinearity} with convergence radius $R$. Then, $\Ncal_\rho$ is a well-defined map from $\Bcal_R$ into $H^\ell_\per(0,L)$, where $\Bcal_R$ denotes the open ball in $H^\ell_\per(0,L)$ with radius $R$, and for all $u \in \Bcal_R$ it holds that
  \begin{equation}\label{eq:integral-representation}
    \Ncal_\rho(u) = \tau_P^{-1} \int_0^{\tau_P} S_{-\tau} (\rho(S_\tau u)) \dd\tau.
  \end{equation}
\end{lemma}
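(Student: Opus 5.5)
The plan is to show that the right-hand side of \eqref{eq:integral-representation} is well defined on $\Bcal_R$, to compute its Fourier coefficients, and to match them with \eqref{eq:resonant-nonlinearity}. Well-definedness of $\Ncal_\rho$ as a map into $H^\ell_\per(0,L)$ then follows for free from the identity.

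\textbf{Step 1 (convergence of $\rho$ on $H^\ell_\per$).} For $v\in\Bcal_R$ the constant $C$ in the norm makes $\|\cdot\|_{H^\ell_\per}$ submultiplicative, and complex conjugation is an isometry, since $\widehat{\bar v}_K=\overline{\hat v_{-K}}$. Hence $\|v^a\bar v^b\|_{H^\ell_\per}\le \|v\|_{H^\ell_\per}^{a+b}$. By \eqref{eq:series-convergence} the series $\rho(v)=\sum\alpha_{a,b}v^a\bar v^b$ then converges absolutely in $H^\ell_\per$, with
\[
\|\rho(v)\|_{H^\ell_\per}\le \sum|\alpha_{a,b}|\,\|v\|_{H^\ell_\per}^{a+b}.
\]
Since $S_\tau$ is unitary, $S_\tau u\in\Bcal_R$ for every $\tau$, and this bound is uniform in $\tau$. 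The map $\tau\mapsto S_{-\tau}\rho(S_\tau u)$ is continuous into $H^\ell_\per$: the partial sums are continuous because $S_\tau$ is strongly continuous and multiplication is continuous, and they converge uniformly in $\tau$. So the Bochner (or Riemann) integral exists in $H^\ell_\per$. The uniform convergence also lets us interchange the sum over $(a,b)$ with the integral.

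\textbf{Step 2 (Fourier computation of a single monomial).} Fix $a,b$ and write $\omega_K=(2\pi K/L)^2$. By \eqref{eq:representation-free-schrödinger-group}, $S_\tau u$ has coefficients $e^{-\ii\tau\omega_J}\hat u_J$, and $\overline{S_\tau u}$ has coefficients $e^{\ii\tau\omega_J}\overline{\hat u_{J}}$ at the modes $e^{-\ii 2\pi Jx/L}$. Expanding the product, which is legitimate because $\sum|\hat u_K|<\infty$ makes all multiple series absolutely convergent, the $K$-th coefficient of $S_{-\tau}\big((S_\tau u)^a\overline{S_\tau u}^b\big)$ is
\[
\sum_{\oneb_a\cdot\j-\oneb_b\cdot\k=K} \exp\!\Big(\ii\tau\big(\tfrac{2\pi}{L}\big)^2\big(K^2-|\j|^2+|\k|^2\big)\Big)\prod_m\hat u_{\j_m}\prod_m\bar{\hat u}_{\k_m}.
\]
Averaging over $[0,\tau_P]$ with $\tau_P=L^2/(2\pi)$, the phase becomes $e^{2\pi\ii (\tau/\tau_P) N}$ with $N=K^2-|\j|^2+|\k|^2\in\Z$. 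Its average is $1$ if $N=0$ and $0$ otherwise. This leaves exactly the sum over $\Rcal_{a,b}(K)$.

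\textbf{Step 3 (interchanges and conclusion).} The Fourier coefficient map $H^\ell_\per\to\C$ is continuous, so it commutes with the integral. Exchanging the $\tau$-integral with the absolutely convergent multiple sum over $(\j,\k)$ is justified by dominated convergence, with majorant $\big(\sum|\hat u_J|\big)^{a+b}$. Summing over $(a,b)$ then gives \eqref{eq:resonant-nonlinearity}. It follows that the series defining $\widehat{\Ncal_\rho(u)}_K$ converges, and that $\Ncal_\rho(u)$ equals the integral, hence lies in $H^\ell_\per$.

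The main obstacle is Step 1 together with the interchange in Step 3. One must make sure that convergence holds in $H^\ell_\per$ and not merely coefficientwise, and this relies on the submultiplicative normalisation and on the conjugation isometry. Step 2 is a routine bookkeeping computation.
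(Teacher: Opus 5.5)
Your proposal is correct and follows essentially the same route as the paper. You show that $\rho$ converges on $\Bcal_R$ using submultiplicativity and the unitarity of $S_\tau$, interchange the $(a,b)$-sum with the $\tau$-integral, reduce to monomials, and average over one period so that every non-resonant phase $e^{2\pi\ii N\tau/\tau_P}$ with $N\neq 0$ vanishes; your justification of the interchanges (the conjugation isometry, uniform convergence in $\tau$, and passing through the continuous Fourier-coefficient functional) is somewhat more explicit than the paper's.
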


\begin{proof}
  Let $u \in H^\ell_\per(0,L)$ with $\|u\|_{H^\ell_\per} < R$. Since $\rho : \C \to \C$ is real analytic and satisfies \eqref{eq:series-convergence}, the map $u \mapsto \rho(u)$ is well-defined and real-analytic from $\Bcal_R$ into $H^\ell_\per(0,L)$. Indeed, the series representation satisfies
  \begin{equation*}
    \sum_{\substack{a,b \geq 0 \\ a+b \geq 2}} |\alpha_{a,b}| \|u\|_{H^\ell_\per(0,L)}^{a+b} \leq \sum_{\substack{a,b \geq 0 \\ a+b \geq 2}} |\alpha_{a,b}| r^{a+b} < \infty
  \end{equation*} 
  for some $r$ satisfying $\|u\|_{H^\ell_\per(0,L)} \leq r < R$, and therefore converges in $H^\ell_\per(0,L)$.
  Here, we use that any function $u \in \Bcal_R$ satisfies $\|u\|_{L^\infty} < R$. Next, since $S_\tau$ is a unitary group on $H^\ell_\per(0,L)$, it maps $\Bcal_R$ into itself. Therefore, we have that
  \begin{equation*}
    \int_0^{\tau_P} S_{-\tau}(\rho(S_\tau u)) \dd \tau = \sum_{\substack{a,b \geq 0 \\ a+b \geq 2}} \alpha_{a,b} \int_0^{\tau_P} S_{-\tau}((S_\tau u)^a \overline{S_{\tau} u}^b) \dd\tau.
  \end{equation*}
  Interchanging the sum and the integral is permitted due to the dominated convergence theorem for Bochner integrals using again that $S_\tau$ is unitary and that the power series is absolutely convergent on a disk of radius $R$. Therefore, it is sufficient to prove \eqref{eq:integral-representation} for a monomial $\rho(z,\bar{z}) = z^a \bar{z}^b$ with $a,b \geq 0$ and $a+b \geq 2$. Recalling \eqref{eq:representation-free-schrödinger-group}, we first calculate
  \begin{equation*}
    \begin{split}
      (S_\tau u)^a \overline{(S_\tau u)}^b &= \Big(\sum_{K_1 \in \Z} \ee^{-\ii \tau \big(\frac{2\pi K_1}{L}\big)^2} \hat{u}_{K_1} \ee^{\ii \frac{2\pi K_1 x}{L}}\Big)^a \Big(\sum_{K_2 \in \Z} \ee^{\ii \tau \big(\frac{2\pi K_2}{L}\big)^2} \bar{\hat{u}}_{K_2} \ee^{-\ii \frac{2\pi K_2 x}{L}}\Big)^b \\
      &= \sum_{\j \in \Z^a, \k \in \Z^b} \ee^{-\ii \tau \big(\frac{2\pi}{L}\big)^2 (|\j|^2 - |\k|^2)} \ee^{\frac{2\pi \ii x}{L} (\oneb_a \cdot \j - \oneb_b \cdot \k)} \prod_{m=1}^a \hat{u}_{\j_m} \prod_{m=1}^b \bar{\hat{u}}_{\k_m}.
    \end{split}
  \end{equation*}
  This equality follows by noting that every series is absolutely convergent since $u \in H^\ell_\per$ with $\ell > 1/2$.
  The Fourier mode with wave number $Q \in \Z$ of $(S_\tau u)^a \overline{(S_\tau u)}^b$ is therefore given by
  \begin{equation*}
    \Fcal[(S_\tau u)^a \overline{(S_\tau u)}^b]_Q = \sum_{\substack{\j \in \Z^a, \k \in \Z^b \\ Q = \oneb_a \cdot \j - \oneb_b \cdot \k}} \ee^{-\ii \tau \big(\frac{2\pi}{L}\big)^2 (|\j|^2 - |\k|^2)} \prod_{m=1}^a \hat{u}_{\j_m} \prod_{m=1}^b \bar{\hat{u}}_{\k_m}
  \end{equation*}
  Next, we calculate
  \begin{equation*}
    \begin{split}
      \int_0^{\tau_P} S_{-\tau} ((S_\tau u)^a \overline{(S_\tau u)}^b) \dd \tau &= \int_0^{\tau_P} \sum_{Q \in \Z} \sum_{\substack{\j \in \Z^a, \k \in \Z^b \\ Q = \oneb_a \cdot \j - \oneb_b \cdot \k}} \ee^{-\ii \tau \big(\frac{2\pi}{L}\big)^2 (|\j|^2 - |\k|^2 - Q^2)} \ee^{\frac{2\pi \ii Q x}{L}} \prod_{m=1}^a \hat{u}_{\j_m} \prod_{m=1}^b \bar{\hat{u}}_{\k_m} \\
      &= \sum_{Q \in \Z} \sum_{\substack{\j \in \Z^a, \k \in \Z^b \\ Q = \oneb_a \cdot \j - \oneb_b \cdot \k}} \ee^{\frac{2\pi \ii Q x}{L}} \prod_{m=1}^a \hat{u}_{\j_m} \prod_{m=1}^b \bar{\hat{u}}_{\k_m} \int_0^{\tau_P} \ee^{-\ii \tau \big(\frac{2\pi}{L}\big)^2 (|\j|^2 - |\k|^2 - Q^2)} \dd \tau
    \end{split}
  \end{equation*}
  where we used in the second equality that $(\hat{u}_K)_{K \in \Z}$ is absolutely summable since $u \in H^\ell_\per(0,L)$ with $\ell > 1/2$. This yields that the sum is bounded independent of $\tau$ and $x$ and therefore, the sum and integral can be interchanged by Fubini's theorem. The integral vanishes if $Q^2 \neq |\j|^2 - |\k|^2$ and equals $\tau_P$ otherwise. Recalling the definition of the set of resonant Fourier modes $\Rcal_{a,b}(Q)$ in \cref{def:nonlinearity} we therefore obtain
  \begin{equation*}
    \int_0^{\tau_P} S_{-\tau} ((S_\tau u)^a \overline{(S_\tau u)}^b) \dd \tau = \tau_P \sum_{Q \in \Z} \sum_{(\j,\k) \in \Rcal_{a,b}(Q)} \ee^{\frac{2\pi \ii Q x}{L}} \prod_{m=1}^a \hat{u}_{\j_m} \prod_{m=1}^b \bar{\hat{u}}_{\k_m},
  \end{equation*}
  which completes the proof.
\end{proof}

\begin{remark}\label{rem:resonance-condition}
  From the proof of \cref{lem:integral-representation} we find that the condition $Q^2 = |\j|^2 - |\k|^2$ identifies the nonlinear interactions of Fourier modes which are resonant under averaging with respect to the periodic Schrödinger group $S_{\tau}$. Such resonant conditions are standard in resonant averaging (see, e.g., \cite{kuksin2018-04ProcRoySocEdinburghSectA}) and resonant wave interactions (see, e.g., \cite{zakharov1992book}). Recalling that each Fourier mode of $v$ satisfies $\partial_\tau \hat{v}_K = - \ii (\frac{2\pi K}{L})^2 \hat{v}_K$ for $K \in \Z$, this can also be interpreted as the resonant nonlinear terms in a normal-form transformation. Here, $Q^2 = |\j|^2 - |\k|^2$ is exactly the condition that the corresponding eigenvalues of the Fourier modes are resonant; see, for example, \cite{craig2013-06AttiAccadNazLinceiClSciFisMatNatur}.
\end{remark}

To complete the proof of \cref{thm:main-result} it remains to prove that the forced Schrödinger system \eqref{eq:forced-Schrödinger-system} admits a unique mild solution and that the integral in \eqref{eq:integral-representation} can be expressed as a trace of this solution.

\begin{proof}[Proof of \cref{thm:main-result}]
  We first show that the forced Schrödinger system
  \begin{equation}\label{eq:schrödinger-system-proof}
    \begin{split}
      \partial_\tau v &= \ii \partial_x^2 v, \\
      \partial_\tau w &= \ii \partial_x^2 w + \rho(v)
    \end{split}
  \end{equation}
  with initial conditions $(v,w)(0) = (u,0) \in \Bcal_R \times H^\ell_\per(0,L)$ has a unique mild solution $(v,w) \in C^0([0,\tau_P];\Bcal_R \times H^\ell_\per(0,L))$. We recall that the free Schrödinger operator $\ii \partial_x^2$ generates the group $S_{\tau}$ and therefore, $v(\tau) = S_{\tau} u \in C^0([0,\tau_P];H^\ell_\per(0,L))$. Since $S_\tau$ is unitary and $\|u\|_{H^\ell_\per} < R$, we find that $\|v(\tau)\|_{H^\ell_\per} < R$ for all $\tau \in \R$. As in the proof of \cref{lem:integral-representation}, we find that $u \mapsto \rho(u)$ is a real-analytic map from $\Bcal_R$ into $H^\ell_\per(0,L)$. Since $\tau \mapsto v(\tau)$ is continuous, $\tau \mapsto \rho(v(\tau))$ is continuous as a composition of continuous functions. Therefore, the $w$-equation in \eqref{eq:schrödinger-system-proof} has a unique mild solution given by the Duhamel formulation
  \begin{equation*}
    w(\tau) = \int_0^\tau S_{\tau-s} \rho(S_{s}u) \dd s,
  \end{equation*}
  see, for example, \cite{pazy1983book}. Finally, we use that $\tau \mapsto S_\tau$ is periodic with period $\tau_P$. Therefore, we obtain $S_{\tau_P-s} = S_{-s}$ and thus
  \begin{equation*}
    w(\tau_P) = \int_0^{\tau_P} S_{-s} (\rho(S_s u)) \dd s.
  \end{equation*}
  Together with \cref{lem:integral-representation}, this completes the proof of \cref{thm:main-result}.
\end{proof}

\section{Application to evolution equations}\label{sec:applications}

We now illustrate how the local reformulation result \cref{thm:main-result} applies to nonlocal evolution problems. We first state a general equivalence result for solutions of a nonlocal evolution equation and its extended local counterpart. Then, we apply the theory to the amplitude equation \eqref{eq:amplitude-equation}.

\begin{proposition}\label{prop:equivalence}
  Fix $\ell > 1/2$ and $n \in \N_0$, and let $\Lcal : \Dcal(\Lcal) \to H^\ell_\per(0,L)$ be the generator of a strongly continuous semigroup $e^{t\Lcal}$. Assume that $e^{t\Lcal}\partial_x^n$, defined on the dense domain $H^{\ell+n}_\per(0,L)$, extends to a bounded linear operator $B_n(t)$ on $H^\ell_\per(0,L)$ for all $t \in (0,T)$ and satisfies
  \begin{equation}\label{eq:smoothing-property}
    \|B_n(t) u\|_{H^\ell_\per(0,L)} \leq C t^{-\gamma} \|u\|_{H^\ell_\per(0,L)}
  \end{equation} 
  for all $t \in (0,T)$ with $\gamma \in [0,1)$.
  Let $\Ncal_\rho$ be a nonlinearity satisfying \cref{def:nonlinearity}. Then, $u \in C^0([0,T];\Bcal_R)$ is a mild solution to
  \begin{equation}\label{eq:nonlocal-evolution-equation}
    \partial_t u = \Lcal u + \partial_x^n \Ncal_\rho(u), \quad \text{i.e.}\quad u(t) = e^{t \Lcal} u(0) + \int_0^t B_n(t-s) \Ncal_\rho(u(s)) \dd s,
  \end{equation}
  if and only if the triple $(u,v,w)$ with $u \in C^0([0,T];\Bcal_R)$ and $(v,w) \in C^0([0,T] \times [0,\tau_P];\Bcal_R \times H^\ell_\per(0,L))$ is a mild solution to the extended local problem
  \begin{equation}\label{eq:local-extended-equation}
    \begin{split}
      \partial_t u &= \Lcal u + \tau_P^{-1} \partial_x^n w(\cdot, \tau_P), \\
      \partial_\tau v &= \ii  \partial_x^2 v, \\
      \partial_\tau w &= \ii \partial_x^2 w + \rho(v).
    \end{split}
  \end{equation}
  with $(v,w)(t,0) = (u(t),0)$.
\end{proposition}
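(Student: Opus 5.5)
The plan is to reduce the equivalence to \cref{thm:main-result}, applied pointwise in $t$, once we fix what "mild solution" means for the extended system \eqref{eq:local-extended-equation}. The $(v,w)$-equations contain no $t$-derivative, so for each fixed $t\in[0,T]$ they form a $\tau$-evolution problem with initial data $(v,w)(t,0)=(u(t),0)$. The $u$-equation is understood via Duhamel,
\begin{equation*}
  u(t) = e^{t\Lcal}u(0) + \tau_P^{-1}\int_0^t B_n(t-s)\, w(s,\tau_P)\dd s .
\end{equation*}
A triple $(u,v,w)$ is then a mild solution precisely when $u\in C^0([0,T];\Bcal_R)$ satisfies this identity and, for every $t$, the map $\tau\mapsto (v,w)(t,\tau)$ is the mild solution of \eqref{eq:forced-Schrödinger-system} with data $(u(t),0)$.

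The key step is the identification $\tau_P^{-1}w(s,\tau_P)=\Ncal_\rho(u(s))$ for every $s\in[0,T]$. Given $u\in C^0([0,T];\Bcal_R)$, \cref{thm:main-result} applies at each fixed $s$, since $\|u(s)\|_{H^\ell_\per}<R$. It gives the unique mild solution $v(s,\tau)=S_\tau u(s)$ and $w(s,\tau)=\int_0^\tau S_{\tau-r}\rho(S_r u(s))\dd r$, with $\tau_P^{-1}w(s,\tau_P)=\Ncal_\rho(u(s))$. Substituting this into the Duhamel formula for the extended system yields exactly the mild formulation in \eqref{eq:nonlocal-evolution-equation}, and the substitution works in both directions.

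For ($\Rightarrow$), start from a mild solution $u$ of \eqref{eq:nonlocal-evolution-equation} and define $(v,w)$ by the formulas above; the $u$-equation of the extended system then holds by the identification. For ($\Leftarrow$), the uniqueness part of \cref{thm:main-result} forces $(v,w)$ to coincide with these formulas, so the identification gives \eqref{eq:nonlocal-evolution-equation}. Before this, we check that the integral $\int_0^t B_n(t-s)\Ncal_\rho(u(s))\dd s$ is well defined as a Bochner integral. The bound \eqref{eq:smoothing-property} with $\gamma<1$ makes $t^{-\gamma}$ integrable, so it suffices that $s\mapsto \Ncal_\rho(u(s))$ is continuous and bounded in $H^\ell_\per$.

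I expect the main technical obstacle to be the regularity of $(v,w)$ jointly in $(t,\tau)$, namely $(v,w)\in C^0([0,T]\times[0,\tau_P];\Bcal_R\times H^\ell_\per)$, together with continuity of $s\mapsto\Ncal_\rho(u(s))$. For $v$, I would use unitarity and strong continuity of $S_\tau$:
\begin{equation*}
  \|S_\tau u(t)-S_{\tau'}u(t')\|_{H^\ell_\per}\le \|u(t)-u(t')\|_{H^\ell_\per}+\|(S_\tau-S_{\tau'})u(t')\|_{H^\ell_\per},
\end{equation*}
and compactness of $u([0,T])$ makes the second term uniformly small.

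For $w$, I would rely on $\rho$ being locally Lipschitz on $\Bcal_R$. This follows from the absolutely convergent power series and the algebra property, applied on the compact set $\{S_r u(t)\}\subset \Bcal_r$ for some $r<R$; that such an $r$ exists uses continuity of $u$ on the compact interval $[0,T]$. Combined with the Duhamel formula and dominated convergence, this gives joint continuity of $w$. Continuity of $\Ncal_\rho(u(\cdot))$ then follows either from the representation in \cref{lem:integral-representation} or directly as $w(\cdot,\tau_P)/\tau_P$.
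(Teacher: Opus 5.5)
Your proposal is correct and follows the same route as the paper. You apply \cref{thm:main-result} at each fixed $t$ to get $\tau_P^{-1}w(t,\tau_P)=\Ncal_\rho(u(t))$ and substitute this into the Duhamel formula, and you obtain the reverse direction from uniqueness in \cref{thm:main-result}; your arguments for joint continuity in $(t,\tau)$ (unitarity and strong continuity of $S_\tau$, plus a Lipschitz bound for $\rho$ on the ball of radius $\max_t\|u(t)\|_{H^\ell_\per}<R$) and for the Bochner integral being well defined just fill in details the paper states briefly.
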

\begin{proof}
  We first prove that solutions to \eqref{eq:nonlocal-evolution-equation} yield solutions to \eqref{eq:local-extended-equation}. Assume that $u \in C^0([0,T];\Bcal_R)$ is a mild solution to \eqref{eq:nonlocal-evolution-equation}.
  For every fixed $t \in [0,T]$, \cref{thm:main-result} yields that there exists a mild solution $(v,w)(t,\cdot) \in C^0([0,\tau_P];H^\ell_\per(0,L))$ to the forced Schrödinger system $\eqref{eq:local-extended-equation}_2$--$\eqref{eq:local-extended-equation}_3$. Additionally, for every $t \in [0,T]$ we then have $\Ncal_\rho(u(t)) = \tau_P^{-1} w(t, \tau_P)$. In fact, the proof of \cref{thm:main-result} provides an explicit representation formula, which in particular shows continuous dependence on the initial data $u$ in $H^\ell_\per(0,L)$. Finally, joint continuity in $(t,\tau)$ follows from the representation of $(v,w)$ together with joint continuity of $(\tau,v) \mapsto S_{\tau} v$ and continuity of $v \mapsto \rho(v)$. This completes this part of the proof.

  The other direction follows immediately from \cref{thm:main-result}.
\end{proof}

\begin{remark}
  If $n = 0$, the assumptions in \cref{prop:equivalence} are satisfied for all strongly continuous semigroups. Therefore, the result specifically applies to nonlocal dispersive equations, as they appear for example in the context of dispersion-managed NLS equations. If $n > 0$, the assumptions require smoothing properties of the semigroup. These are satisfied, for example, if $\Lcal$ is a strictly elliptic operator of order $m > n$.
\end{remark}

\begin{remark}
  \cref{prop:equivalence} provides equivalence of mild solutions between the nonlocal equation \eqref{eq:nonlocal-evolution-equation} and the corresponding local extension \eqref{eq:local-extended-equation}. The same result also holds for classical solutions $u \in C^1([0,T];H^\ell_\per(0,L)) \cap C^0([0,T];\Dcal(\Lcal))$ in the sense that $(v,w)$ are still mild solutions to the forced Schrödinger equations $\eqref{eq:local-extended-equation}_2$--$\eqref{eq:local-extended-equation}_3$ which are $C^1$ in $t$. That is, $(v,w) \in C^1([0,T];C^0([0,\tau_P];H^\ell_\per(0,L)))$. Here, $C^1$ in $t$ is obtained from differentiating the explicit solution formulas for $(v,w)$ with respect to $t$.
\end{remark}

We now apply \cref{prop:equivalence} to the nonlocal amplitude equation \eqref{eq:amplitude-equation}. As pointed out below \cref{def:nonlinearity}, the nonlinearity $2 a \langle |a|^2\rangle - \Fcal^{-1}(\hat{a} |\hat{a}|^2)$ indeed satisfies \cref{def:nonlinearity} with generating function $\rho(v) = v^2 \bar{v} = v |v|^2$. The linear operator is given by
\begin{equation*}
  \Lcal = - \partial_x^4 - \mu \partial_x^2.
\end{equation*}
This operator generates an analytic semigroup on $H^\ell_\per(0,L)$ with domain $H^{\ell+4}_\per(0,L)$. Therefore, the extension property and smoothing estimate \eqref{eq:smoothing-property} for $e^{-t(\partial_x^4+\mu\partial_x^2)} \partial_x^2$ are automatic; see, for example, \cite[Chapter 2, Theorem 6.13]{pazy1983book}. An application of \cref{prop:equivalence} then yields the following result.

\begin{corollary}\label{cor:localisation-amplitude-eq}
  Let $\ell > 1/2$. Then, $a \in C^0([0,T];H^\ell_\per(0,L))$ is a mild solution to \eqref{eq:amplitude-equation} if and only if the triple $(a,v,w)$ with $a \in C^0([0,T];H^\ell_\per(0,L))$ and $(v,w) \in C^0([0,T] \times [0,\tau_P];H^\ell_\per(0,L) \times H^\ell_\per(0,L))$ is a mild solution to the extended local problem
  \begin{equation}\label{eq:local-extended-amplitude equation}
    \begin{split}
      \partial_t a &= \partial_x^2\Big[-\mu a - \partial_x^2 a + \frac{3}{2\tau_P} w(t,\tau_P)\Big], \\
      \partial_\tau v &= \ii  \partial_x^2 v, \\
      \partial_\tau w &= \ii \partial_x^2 w + v |v|^2
    \end{split}
  \end{equation}
  with $(v,w)(t,0) = (a(t),0)$.
\end{corollary}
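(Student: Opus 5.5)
The corollary follows by applying \cref{prop:equivalence} with $n = 2$, $\Lcal = -\partial_x^4 - \mu\partial_x^2$ and $\rho(v) = v|v|^2$. To do so, I first rewrite \eqref{eq:amplitude-equation} in the form \eqref{eq:nonlocal-evolution-equation}. The right-hand side is $\Lcal a + \partial_x^2\big[\tfrac32 \Ncal_\rho(a)\big]$, because the computation after \cref{def:nonlinearity} gives $\widehat{\Ncal_\rho(a)}_K = 2\hat a_K\sum_Q|\hat a_Q|^2 - \hat a_K|\hat a_K|^2$. This is exactly the Fourier symbol of $2a\langle|a|^2\rangle - \Fcal^{-1}[\hat a|\hat a|^2]$ with the paper's definition of $\langle\cdot\rangle$. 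The factor $\tfrac32$ is absorbed by replacing $\rho$ with $\tilde\rho = \tfrac32\rho$. By linearity of \eqref{eq:resonant-nonlinearity} in the coefficients $\alpha_{a,b}$, we have $\Ncal_{\tilde\rho} = \tfrac32\Ncal_\rho$. Equivalently, since the $w$-equation is linear in its forcing, one may keep $\rho$ and carry the constant $\tfrac32$ onto the trace term. Either way this produces the coupling $\tfrac{3}{2\tau_P}w(t,\tau_P)$ in \eqref{eq:local-extended-amplitude equation}, with $w$ forced by $v|v|^2$.

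Next I verify the hypotheses of \cref{prop:equivalence}. The generating function $\rho(z) = z^2\bar z$ is a single monomial with $a+b = 3 \ge 2$, so \eqref{eq:series-convergence} holds with $R = \infty$. Hence $\Bcal_R = H^\ell_\per(0,L)$, and the ball restrictions in \cref{prop:equivalence} disappear, matching the function spaces in the corollary.

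For the semigroup, $\Lcal$ acts diagonally in Fourier space with symbol $\lambda_K = -k^4 + \mu k^2$, where $k = 2\pi K/L$. So $e^{t\Lcal}\partial_x^2$ has symbol $-k^2 e^{t(-k^4+\mu k^2)}$. I prove the smoothing estimate \eqref{eq:smoothing-property} directly rather than only citing analyticity. For $t\in(0,T)$,
\[
\sup_k k^2 e^{-tk^4 + t\mu k^2} \le e^{T\mu^2/4}\sup_k k^2 e^{-tk^4/2}\cdot \sup_k e^{-tk^4/2+t\mu k^2}e^{-T\mu^2/4}\le C_T\, t^{-1/2}.
\]
Here the first supremum is bounded by $C t^{-1/2}$ by the scaling $s = t^{1/2}k^2$, and the remaining factor is bounded uniformly for $t\le T$. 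This gives $\gamma = 1/2 < 1$, and the multiplier bound transfers directly to the $H^\ell_\per$ norm. Strong continuity of $e^{t\Lcal}$ on $H^\ell_\per$ is standard for such Fourier multipliers with symbols bounded above.

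With both hypotheses verified, \cref{prop:equivalence} yields the stated equivalence of mild solutions. I expect the only point needing care to be the bookkeeping of the constant $\tfrac32$: the corollary keeps the forcing $v|v|^2$ and puts $\tfrac{3}{2\tau_P}$ in front of $w(t,\tau_P)$. This follows from linearity of the Duhamel formula in the proof of \cref{thm:main-result}, which gives $\tfrac32\Ncal_\rho(a) = \tfrac{3}{2\tau_P}w(\tau_P)$. The mild formulations of the two problems then coincide term by term.
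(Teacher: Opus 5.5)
Your proposal is correct and follows the paper's argument: apply \cref{prop:equivalence} with $n=2$, $\Lcal=-\partial_x^4-\mu\partial_x^2$ and $\rho(v)=v|v|^2$. The differences are minor: you verify the smoothing estimate \eqref{eq:smoothing-property} directly from the Fourier symbol (getting $\gamma=1/2$) where the paper cites analytic semigroup theory, and you make explicit two points the paper leaves implicit, namely $R=\infty$ and moving the factor $\tfrac32$ onto the trace term via linearity of the $w$-equation.
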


\section{Generalisations}\label{sec:generalisations}

We have so far restricted the results to nonlinearities that can be localised via an extension with a forced Schrödinger system. We conclude the paper by discussing generalisations of this result to more general classes of flows and nonlinearities.

\paragraph{More general periodic groups} In \cref{thm:main-result}, the dispersive extension comes in the form of a forced Schrödinger equation. In fact, the same localisation result holds for a more general class of periodic groups. Let $\tilde{S}_\tau$ be a periodic $C_0$ group on a Banach algebra $X$ with generator $A$ and period $\tau_P$, and let $M \coloneqq \sup_{\tau \in [0,\tau_P]} \|\tilde{S}_\tau\| < \infty$. Additionally, assume that for any real-analytic function $\rho : \C \to \C$ satisfying \eqref{eq:series-convergence} with convergence radius $R$, the map $u \mapsto \rho(u)$ is well-defined and analytic from $\{u \in X \,:\, \|u\|_X < \tfrac{R}{M}\}$ into $X$. Then, we have
\begin{equation*}
  \int_0^{\tau_P} \tilde{S}_{-\tau} \rho(\tilde{S}_\tau u) \dd\tau = w(\tau_P),
\end{equation*}
where $(v,w) \in C^0([0,\tau_P];X \times X)$ is the mild solution to
\begin{equation*}
  \begin{split}
    \partial_\tau v &= A v, \\
    \partial_\tau w &= A w + \rho(v)
  \end{split}
\end{equation*}
with $(v,w)(0) = (u,0)$. Consider the special case of $X = H^\ell_\per(0,L)$ and a general operator $A = -\ii\omega(\DD)$ with $\DD = -\ii \partial_x$ for a polynomial $\omega : \R \to \R$. Then, periodicity of $\tilde{S}_{\tau} = e^{-\ii\tau\omega(\DD)}$ with period $\tau_P$ is equivalent to $\tau_P \omega(\frac{2\pi K}{L}) \in 2\pi \Z$ for all $K \in \Z$. In this setting, the orbit average can also be expressed as a resonant nonlinearity $\tilde{\Ncal}_\rho$ defined as in \eqref{eq:resonant-nonlinearity} with $\Rcal_{a,b}$ replaced by
\begin{equation*}
  \tilde{\Rcal}_{a,b}(K) \coloneqq \Big\{(\j,\k) \in \Z^a \times \Z^b \,:\, \omega\Big(\frac{2\pi K}{L}\Big) = \sum_{m = 1}^a \omega\Big(\frac{2\pi\j_m}{L}\Big) - \sum_{m = 1}^b \omega\Big(\frac{2\pi\k_m}{L}\Big) \text{ and } K = \oneb_a \cdot \j - \oneb_b \cdot \k\Big\}
\end{equation*}
to encode the resonance condition enforced by the new dispersion relation $\omega$. Note that $\omega$ can be replaced by a generic smooth function. However, this does not yield an extended local system in general. Both statements are obtained by following the proofs in \cref{sec:localisation-result}.

\paragraph{A more general class of nonlinearities}
We now briefly discuss more general classes of nonlinearities. For simplicity, we restrict again to $S_\tau = e^{\ii\tau\partial_x^2}$ on $H^\ell_\per(0,L)$. First, we consider weighted orbital averages of the form
\begin{equation*}
  \int_0^{\tau_P} \phi(\tau) S_{-\tau} \rho(S_\tau u) \dd\tau
\end{equation*}
with $\phi \in L^1((0,\tau_P);\C)$, which appear in dispersion-managed NLS equations; see, for example, \cite{choi2023-06JMathAnalAppl}. These can also be expressed as the trace of the mild solution to a system of forced Schrödinger equations
\begin{equation*}
  \begin{split}
    \partial_\tau v &= \ii\partial_x^2 v, \\
    \partial_\tau w &= \ii \partial_x^2 w + \phi(\tau) \rho(v)
  \end{split}
\end{equation*}
with $(v,w)(0) = (u,0)$. The proof uses the same explicit Duhamel representation and additionally uses that $\tau \mapsto \phi(\tau) \rho(S_\tau u) \in L^1((0,\tau_P);H^\ell_\per(0,L))$ since $\rho(S_\tau u)$ is bounded. However, unless $\phi(\tau) \equiv \tau_P^{-1}$ (in which case we recover \cref{thm:main-result}), the weighted orbital average in general cannot be rewritten as a resonant nonlocal term similar to \cref{def:nonlinearity}.

Finally, we consider the case in which $\rho(u)$ is a polynomial function of $(u,\partial_x u,\dots,\partial_x^n u)$. In this case, we still obtain an analogue of \cref{thm:main-result}. However, since the Schrödinger group $S_\tau$ does not gain regularity, we obtain a mild solution of the extended forced Schrödinger system with $(v(\tau),w(\tau)) \in H^\ell_\per(0,L) \times H^{\ell-n}_\per(0,L)$, provided that $\ell > n + 1/2$ and using the continuity of $\rho : H^\ell_\per(0,L) \to H^{\ell-n}_\per(0,L)$. Therefore, an analogue of \cref{thm:main-result} can also be proved for a class of nonlinearities with additional internal Fourier multipliers. However, we do not make this precise here.

\section*{Acknowledgements}

B.H.~is funded by the Deutsche Forschungsgemeinschaft (DFG, German Research Foundation) -- Project-ID 543917644.

\paragraph{Use of generative AI} Generative AI tools (ChatGPT 5.6 Sol, Claude Opus 5) were used during the preparation of this manuscript, specifically for brainstorming ideas, additional literature review, and draft editing. All mathematical results and final formulations are the authors' own work.

\emergencystretch=3em
\printbibliography

\authordetails

\end{document}